\documentclass[12pt]{amsart}



\usepackage{amssymb}

\usepackage{enumerate}

\usepackage{graphicx}

\makeatletter
\@namedef{subjclassname@2010}{%
  \textup{2010} Mathematics Subject Classification}
\makeatother



\newtheorem{thm}{Theorem}[section]
\newtheorem*{thm*}{Main Theorem}
\newtheorem{cor}[thm]{Corollary}

\newtheorem{prop}[thm]{Proposition}

\newcommand{\R}{\mathbb{R}}

\newcommand{\inv}{\mbox{inv}}
\newcommand{\interior}{\mbox{int }}
\newcommand{\ip}[2]{\langle #1, #2 \rangle}



\theoremstyle{definition}
\newtheorem{defin}[thm]{Definition}
\newtheorem{rem}[thm]{Remark}



\numberwithin{equation}{section}


\frenchspacing

\textwidth=13.5cm
\textheight=23cm
\parindent=16pt
\oddsidemargin=-0.5cm
\evensidemargin=-0.5cm
\topmargin=-0.5cm



\usepackage[colorinlistoftodos,prependcaption]{todonotes}
\usepackage{xargs}
\usepackage{xcolor}
\newcommandx{\rMaciej}[2][1=]{\todo[#1]{Maciej: #2}}
\newcommandx{\rNils}[2][1=]{\todo[linecolor=blue,backgroundcolor=blue!25,bordercolor=blue,#1]{Nils: #2}}


\begin{document}


\baselineskip=17pt



\title[gradient proper maps]{Connected components of the space of proper gradient vector fields}

\author[M. Starostka]{Maciej Starostka}
\address{Ruhr Universitat Bochum \& Gdansk University of Technology}
\email{maciejstarostka@gmail.com}

\date{}

\begin{abstract}
We show that there exist two proper gradient vector fields on $\R^n$ which are homotopic in the category of proper maps but not homotopic in the category of proper gradient maps.
\end{abstract}

\subjclass[2010]{Primary 55Q05; Secondary 37B30}

\keywords{gradint homotopy, proper maps, Morse cohomology}

\maketitle

\section{Introduction}
Two vector fields on a closed unit ball $B^n \subset \R^n$ which do not vanish on $S^{n-1}$ are homotopic if and only if they have the same degree. In 1990, A. Parusinski asked and answered the following question: can we get a better invariant if we restrict the class of vector fields to gradient vector fields? The answer is negative. Namely (Theorem 1 in \cite{Par}), two gradient vector fields on $B^n$, which do no vanish on $S^{n-1}$, are gradient homotopic if and only if they are homotopic, i.e. if and only if they have the same degree. \\
The aim of this note is to show that we get a different answer if instead of vector fields on a ball we consider proper vector fields on $\R^n$. Again, two proper vector fields are homotopic if and only if they have the same degree. However, there exist proper gradient vector fields having the same degree, which are not gradient homotopic. \\

\begin{rem} The motivation for considering this problem comes from the study of invariants of $3$ and $4$-dimensional manifolds. The Bauer-Furuta type invariant is a homotopy class of a map which is a compact perturbation of a Fredholm operator and extends to the one point compactification between Hilbert spaces. Such classes are in one-to-one correspondence with elements of some stable homotopy groups (see \cite{Bauer}). On the other hand, there are invariants defined by Manolescu via the Conley index theory (\cite{Manolescu}). Our work suggests, that although the invariant defined via the Conley index theory is stronger than the corresponding Bauer-Furuta type invariant, this should no longer be the case if one restricts the class of homotopies to the category of gradient maps.
\end{rem}

Author would like to thank Kazimierz G\c{e}ba, Marek Izydorek and Thomas Rot for useful comments on the topic.

\section{Main Theorem}
Denote by $B(R) \subset \R^n$ a closed ball of radius $R$ centered at the orgin. We identify vector fields on $\R^n$ with maps from $\R^n$ into itself. Therefore a proper vector field extends to a continuous map between one point compactifications $S^n := S^{\R^n}$. Let $\mathcal{V}$ be the space of proper vector fields on $\R^n$ with a topology induced from $C^0(S^n,S^n)$. Let $F$ and $G$ be proper. In particular, we can choose $R > 0$ such that $F^{-1}(0)$, $G^{-1}(0) \subset B(R)$. Then, $F$ and $G$ are homotopic as proper maps, i.e. lie in the same connected component of $\mathcal{V}$, if and only if they have the same degree:
\[
\deg(F,B(R)) = \deg(G,B(R)).
\]
See \cite{Rot18} for a more detailed discussion. Now we would like to ask a question, whether the degree characterises also connected components of proper gradient vector fields.

\begin{defin}
Denote by $\mathcal{V}^\nabla$ subspace of $\mathcal{V}$ of gradient vector fields. We say that $F$ and $G \in \mathcal{V}^\nabla$ are \textit{gradient homotopic} if they lie in the same connected component of $\mathcal{V}^\nabla$.
\end{defin}
Our aim is to show the following.
\begin{thm*}
If $n \geqslant 2$ then there exist proper gradient vector fields on $\R^n$ which are homotopic but not gradient homotopic.
\end{thm*}

\subsection{Local Morse cohomology}
Let $\eta$ be a local flow on $\R^n$. 
\begin{defin}
We say that a closed and bounded set $U$ is an \emph{isolating neighbourhood} for $\eta$ if $\inv(U,\eta) \subset \interior U$, where $\inv(U,\eta) = \{x \in U| \eta(\R,x)\}$ is the invariant subset.
\end{defin}
The following Proposition is well-known (see \cite{Rot} or \cite{JDE} for more general setting).
\begin{prop}\label{prop:localMorse}
Let $f$ be a Morse-Smale function and  let $U$ be an isolating neighbourhood for the gradient flow of $f$. Then the local Morse cohomology groups $H^*(f,U)$ are well-defined. Moreover, let $\{\eta_\lambda\}_{\lambda \in [0,1]}$ be a continuous family of flows such that $U$ is an isolating neighbourhood for $\eta_\lambda$ for every $\lambda$ and $\eta_0, \eta_1$ are gradient flows for the Morse-Smale functions $f$ and $g$, respectively. Then
\[
H^*(f,U) \simeq H^*(g,U)
\]
\end{prop}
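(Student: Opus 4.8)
The plan is to treat the two assertions separately: first the construction of $H^*(f,U)$, and then its invariance along the family $\{\eta_\lambda\}$.

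For the first part, I would build the Morse cochain complex attached to the pair $(f,U)$. Since $U$ is an isolating neighbourhood for the gradient flow of the Morse function $f$, the invariant set $\inv(U,\eta_0)$ is compact and contained in $\interior U$; as $f$ is Morse, it contains only finitely many critical points, and these generate a graded abelian group $C^*(f,U)$ indexed by the Morse index. The coboundary $\delta$ is defined by counting, with signs fixed by a choice of orientations, the isolated trajectories of the gradient flow connecting critical points whose indices differ by one. The two facts that make this work are: (i) any connecting orbit between critical points of $\inv(U,\eta_0)$ is itself contained in $\inv(U,\eta_0) \subset \interior U$, so no trajectory can escape through $\partial U$; and (ii) the Morse-Smale condition forces the relevant moduli spaces to be transversally cut out, hence finite in the index-difference-one case and one-dimensional in the index-difference-two case. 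Standard compactness and gluing for the one-dimensional moduli spaces, together with (i), which rules out breaking at the boundary, yield $\delta^2 = 0$, so $H^*(f,U) := H^*(C^*(f,U),\delta)$ is defined. Independence of the auxiliary orientation data and of the particular Morse-Smale metric used (among those keeping $U$ isolating) follows from the usual continuation maps within the space of gradient Morse-Smale flows.

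For the second part, the difficulty is that the intermediate flows $\eta_\lambda$ need not be gradient flows, so I cannot define a Morse complex for them and cannot interpolate by a naive continuation map counting $\lambda$-dependent trajectories. The way around this is to pass to the Conley index. I would first record the classical identification, for a gradient Morse-Smale flow, of the local Morse cohomology $H^*(f,U)$ with the cohomological Conley index $CH^*(\inv(U,\eta_0))$ of the isolated invariant set; concretely, the Conley index has the homotopy type of a finite CW pair with one cell for each critical point in $\inv(U,\eta_0)$, and under this model the Morse coboundary $\delta$ is identified with the cellular one. Granting this, the family $\{\eta_\lambda\}$ is precisely a continuation of isolated invariant sets: $U$ is a common isolating neighbourhood for all $\eta_\lambda$, so $\inv(U,\eta_0)$ and $\inv(U,\eta_1)$ are related by continuation in the sense of Conley's theory.

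The main obstacle is therefore the invariance step, and the tool that resolves it is the continuation invariance of the Conley index: a continuous family of flows admitting a common isolating neighbourhood induces a canonical isomorphism $CH^*(\inv(U,\eta_0)) \simeq CH^*(\inv(U,\eta_1))$. Combining this with the identification above at both endpoints gives
\[
H^*(f,U) \simeq CH^*(\inv(U,\eta_0)) \simeq CH^*(\inv(U,\eta_1)) \simeq H^*(g,U),
\]
which is the desired conclusion. The essential input I would need to establish carefully, or cite from \cite{Rot} or \cite{JDE}, is the first isomorphism, identifying the hands-on Morse complex with the Conley index; once that is in place the continuation property of the Conley index does the rest, and in particular the non-gradient nature of the intermediate $\eta_\lambda$ causes no trouble because the Conley index is defined for arbitrary local flows.
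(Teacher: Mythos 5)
Your overall strategy is the one the paper itself implicitly relies on: the paper gives no proof of this proposition, quoting it as well-known with references to \cite{Rot} and \cite{JDE}, and those references proceed essentially as you do --- construct the local Morse complex, identify its cohomology with the cohomological Conley index of $\inv(U,\eta_0)$, and then let continuation invariance of the Conley index absorb the non-gradient flows $\eta_\lambda$. Your second half is therefore sound. The problem is in the first half: your claim (i) is false, and it is exactly the claim on which your construction and your $\delta^2=0$ argument rest.

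An isolating neighbourhood only constrains orbits that remain in $U$ for all time; it does not prevent an orbit from leaving $U$ and returning. Concretely, take $f(x,y)=\cos x+y^2/2$ on $\R^2$, with saddle $p=(0,0)$ and minima $q_\pm=(\pm\pi,0)$, and let $U$ be the union of a thin closed box around the segment $[0,\pi]\times\{0\}$ and a small closed ball around $q_-$. Then $U$ is an isolating neighbourhood for the gradient flow and $p,q_+,q_-\in\inv(U,\eta_0)$, yet the connecting orbit from $p$ to $q_-$ runs along $[-\pi,0]\times\{0\}$ and is not contained in $U$. Consequently, if $\delta$ counts \emph{all} connecting orbits between generators, it counts orbits having nothing to do with $\inv(U,\eta_0)$; worse, for generators of index difference two the ends of the one-dimensional moduli space of all connecting trajectories include breakings at rest points \emph{outside} $U$, which the complex cannot see, so the parity argument for $\delta^2=0$ collapses, and the resulting groups would in any case not match the Conley index, which depends only on $\inv(U,\eta_0)$. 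The correct definition --- the one used in \cite{Rot} and \cite{JDE} --- counts only trajectories whose image lies in $U$, equivalently trajectories contained in $\inv(U,\eta_0)$. With that definition your argument does go through: limits of such trajectories stay in the compact invariant set $\inv(U,\eta_0)\subset\interior U$, so breaking can occur only at critical points that are generators of the complex, and glued trajectories lie in a small neighbourhood of the broken ones, hence again in $\inv(U,\eta_0)$. Once the complex is set up this way, your identification of $H^*(f,U)$ with $CH^*(\inv(U,\eta_0))$ and the continuation argument at the end are exactly right.
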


We will now show that a ball of big enough radius  is an isolating neigbourhood for a family of flows generated by proper gradient vector fields (compare \cite[pp. 56-57]{Schwarz}).

\begin{prop}\label{prop:maxInvSet}
Let $\{f_\lambda\}_{\lambda \in [0,1]}$ be a family of functions on $\R^n$ such that the corresponding family $\{\nabla f_\lambda \}_{\lambda \in [0,1]}$ is a continuous family of proper vector fields. Denote by $\eta_\lambda$ the flow generated by $\nabla f_\lambda$. Then there exists $R > 0$ such that $B(R)$ is an isolating neighbourhood for every $\eta_\lambda$ and
\[
\inv(B(R'),\eta_\lambda) = \inv(B(R),\eta_\lambda)
\]
for every $\lambda$ and every $R' > R$.
\end{prop}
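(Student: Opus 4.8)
The plan is to bound, uniformly in $\lambda$, the set $S_\lambda$ of points whose full $\eta_\lambda$-orbit is bounded, and then to observe that this set is precisely $\inv(B(R),\eta_\lambda)$ for every large $R$.

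First I would extract two uniform consequences of properness. Viewing the continuous family $\{\nabla f_\lambda\}$ as a continuous map $\tilde\Phi\colon[0,1]\times S^n\to S^n$ with $\tilde\Phi(\lambda,\infty)=\infty$, the set $\tilde\Phi^{-1}(0)$ is compact and disjoint from the slice $[0,1]\times\{\infty\}$; hence there is $R_0>0$ with $(\nabla f_\lambda)^{-1}(0)\subset B(R_0)$ for every $\lambda$, i.e. all critical points lie in a fixed ball. Continuity of $\tilde\Phi$ along the $\infty$-slice gives more: for every $\rho>0$ there is a radius outside which $|\nabla f_\lambda|\ge\rho$ for all $\lambda$, and I would fix $\rho=1$ to obtain $R_1>R_0$ such that $|\nabla f_\lambda(x)|\ge 1$ whenever $|x|\ge R_1$. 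Finally, since $(\lambda,x)\mapsto\nabla f_\lambda(x)$ is continuous on the compact set $[0,1]\times B(R_0)$, the oscillation of each $f_\lambda$ over $B(R_0)$ is bounded, via the mean value inequality, by a constant $C:=2R_0\sup_{[0,1]\times B(R_0)}|\nabla f_\lambda|$ independent of $\lambda$ (note this uses only the gradient family, not continuity of the potentials in $\lambda$).

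Next I would use the gradient structure. Let $x$ have a bounded complete orbit; such orbits are automatically complete since they remain in a compact ball. As $f_\lambda$ is a strict Lyapunov function, $\frac{d}{dt}f_\lambda=|\nabla f_\lambda|^2\ge0$, the $\alpha$- and $\omega$-limit sets are nonempty and consist of critical points, hence lie in $B(R_0)$; moreover $f_\lambda$ is monotone along the orbit, so the total increase of $f_\lambda$ along the forward orbit is at most $C$. The key estimate is then an arc-length/energy comparison. If $|x|=\rho>R_1$, the forward orbit must descend from radius $\rho$ to its $\omega$-limit in $B(R_0)$, so it first meets the sphere of radius $R_1$; parametrizing this initial sub-arc by arc length $s$ gives $\frac{d}{ds}f_\lambda=|\nabla f_\lambda|\ge1$ throughout (radius $\ge R_1$), whence the increase of $f_\lambda$ over the sub-arc is at least its length, which is at least $\rho-R_1$. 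Comparing with the global bound $C$ yields $\rho\le R_1+C$. Thus every bounded complete orbit lies in $B(R_2)$ with $R_2:=R_1+C$, uniformly in $\lambda$.

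To finish, take any $R>R_2$; any $R'>R$ is handled identically. The set $S_\lambda$ is invariant and contained in $B(R_2)\subset\interior B(R)$, so every orbit in $S_\lambda$ stays inside $B(R)$; hence $\inv(B(R),\eta_\lambda)=S_\lambda=\inv(B(R'),\eta_\lambda)$, which gives both the stated equality and $\inv(B(R),\eta_\lambda)\subset\interior B(R)$, i.e. $B(R)$ is isolating for every $\eta_\lambda$. The main obstacle is exactly this arc-length estimate controlling the spatial excursion of connecting orbits: properness bounds the critical points but not, a priori, the orbits joining them, and one must combine the uniform lower bound $|\nabla f_\lambda|\ge1$ near infinity with the monotonicity of $f_\lambda$ to forbid a bounded orbit from wandering far out. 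The other delicate point is ensuring that each of $R_0$, $R_1$, and $C$ is uniform in $\lambda$, which rests on the compactness of $[0,1]\times B(R_0)$ and the continuity of the family.
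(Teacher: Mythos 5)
Your proof is correct and follows essentially the same route as the paper's: properness gives a uniform lower bound $|\nabla f_\lambda|\geqslant 1$ outside a fixed ball containing all zeros of the gradients, and the length-versus-variation estimate (the length of an orbit arc in that outer region is at most the increase of $f_\lambda$, which is bounded by its oscillation over the inner ball) confines every bounded orbit uniformly in $\lambda$. The differences are cosmetic---you run the estimate forward in time in arc-length parametrization where the paper runs it backward in the time parameter---except for one small improvement: your constant $C$ is built from the gradient family alone via the mean value inequality, whereas the paper's $r_2=\max\{|f_\lambda(x)|\}$ implicitly assumes the potentials are normalized (say $f_\lambda(0)=0$), since they are only determined up to $\lambda$-dependent additive constants.
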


\begin{proof}
By the properness, we can find an $r_1$ such that 
\[
\bigcup_{\lambda \in [0,1]}(\nabla f_\lambda)^{-1}(B(1)) \subset B(r_1).
\]
Put 
\[
r_2 = \max\{|f_\lambda(x)| : x \in B(r_1), \quad \lambda \in [0,1]\}
\]
We will show that $R = 2(r_1 + r_2)$ satisfies the statement. Let $u:\R \to \R^n$ be a bounded trajectory of the flow $\eta_\lambda$ for some $\lambda \in [0,1]$. We have to show that $|u(0)| < R$. Suppose that $|u(0)| > r_1$. Since $\alpha$-limit of $u$ is contained in the interior of $B(r_1)$, we can choose  $t_0 < 0$ such that $|u(t)| \geqslant r_1$ for every $t \in [t_0,0]$ and $|u(t_0)| = r_1$. Let $x$ and $y$ be points in $\alpha$ and $\omega$ limit of $u$, respectively. We have
\[ 
|x - u(0)| \leqslant |x - u(t_0)| + |u(t_0) - u(0)| \leqslant 2r_1 + \int_{t_0}^0|\dot{u}(s)| \, ds
\]
On the other hand, 
\[
|\dot{u}(s)| \leqslant |\dot{u}(s)|^2 =  \ip{\nabla f_\lambda (u(s))}{\dot{u}(s)} = \frac{d}{ds}(f_\lambda \circ u)(s)
\]
for $s \in [t_0,0]$. Finally,
\[
\int_{t_0}^0|\dot{u}(s)| \, ds \leqslant \int_{t_0}^0 \frac{d}{ds}(f_\lambda \circ u)(s) \, ds \leqslant f_\lambda(y) - f_\lambda(x) \leqslant 2r_2
\]
\end{proof}

By the Propositions \ref{prop:localMorse} and \ref{prop:maxInvSet} we get the following corollary.

\begin{cor}\label{cor:Invariance}
Let $f$ and $g$ be two Morse-Smale functions, such that $\nabla f$, $\nabla g$ are gradient homotopic. Then for $R \gg 0$ the Morse cohomology groups $H^*(f,B(R))$ and $H^*(g,B(R))$ are isomorphic.
\end{cor}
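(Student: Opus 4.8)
The plan is to chain the two preceding propositions together, using the gradient-homotopy hypothesis only to manufacture the family of flows to which Proposition \ref{prop:maxInvSet} and Proposition \ref{prop:localMorse} apply. First I would unwind the definition of gradient homotopy: since $\nabla f$ and $\nabla g$ lie in the same component of $\mathcal{V}^\nabla$, I would select a continuous path $\lambda \mapsto V_\lambda$ in $\mathcal{V}^\nabla$ with $V_0 = \nabla f$ and $V_1 = \nabla g$. Each $V_\lambda$ is then a proper gradient vector field, and I would fix a continuous choice of potentials $f_\lambda$, for instance by the radial line integral $f_\lambda(x) = \int_0^1 \ip{V_\lambda(sx)}{x}\,ds$, which recovers the potential normalised to vanish at the origin and varies continuously with $\lambda$. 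With $f_0 = f$ and $f_1 = g$ up to an irrelevant additive constant, this produces exactly the data required by Proposition \ref{prop:maxInvSet}: a family $\{f_\lambda\}_{\lambda \in [0,1]}$ whose gradients form a continuous family of proper vector fields.

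Next I would apply Proposition \ref{prop:maxInvSet} to this family to obtain a single radius $R > 0$ such that the closed ball $B(R)$ is an isolating neighbourhood for the flow $\eta_\lambda$ generated by $\nabla f_\lambda$, \emph{simultaneously} for all $\lambda \in [0,1]$. This uniformity is the crux of the argument: one isolating neighbourhood serves the entire homotopy. With this $R$ fixed, the family $\{\eta_\lambda\}$ is a continuous family of flows all isolated by the common neighbourhood $B(R)$, whose endpoints $\eta_0$ and $\eta_1$ are the gradient flows of the Morse-Smale functions $f$ and $g$.

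Finally I would invoke Proposition \ref{prop:localMorse}, whose hypotheses are now met verbatim: the local Morse cohomologies $H^*(f,B(R))$ and $H^*(g,B(R))$ are well-defined and isomorphic, which is the assertion. The qualifier $R \gg 0$ is accounted for by the last clause of Proposition \ref{prop:maxInvSet}, since for any $R' > R$ one has $\inv(B(R'),\eta_\lambda) = \inv(B(R),\eta_\lambda) \subset \interior B(R) \subset \interior B(R')$, so $B(R')$ is again isolating for every $\lambda$ and the same isomorphism persists.

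I do not expect a deep obstacle here; the argument is essentially a two-line composition of the propositions, and the only points needing care are bookkeeping ones. The main thing to get right is the passage from ``same connected component'' to an actual continuous path of gradient fields: I must ensure that gradient homotopy supplies a genuine path in $\mathcal{V}^\nabla$ (rather than mere topological connectedness) and that the potentials $f_\lambda$ can be chosen to depend continuously on $\lambda$, so that the family literally satisfies the hypotheses of Proposition \ref{prop:maxInvSet}. Everything after that step is automatic.
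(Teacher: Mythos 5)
Your proposal is correct and follows exactly the route the paper intends: the paper derives this corollary directly by combining Proposition \ref{prop:maxInvSet} (a single radius $R$ so that $B(R)$ isolates the whole family of flows) with Proposition \ref{prop:localMorse} (continuation invariance of local Morse cohomology over a commonly isolated family). Your extra bookkeeping --- extracting a path $\{V_\lambda\}$ from the connected-component hypothesis and recovering continuous potentials by the radial integral $f_\lambda(x)=\int_0^1\ip{V_\lambda(sx)}{x}\,ds$ --- is exactly the implicit content of the paper's one-line deduction, filled in correctly.
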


\subsection{Proof of the Main Theorem}
Put
\[
f(x_1,\ldots,x_n) = x_1^2 + \ldots x_n^2, \quad g(x_1,\ldots,x_n) = -x_1^2 -x_2^2+\ldots + x_n^2
\]
Then $\deg (\nabla f, B(R)) = 1 = \deg (\nabla g, B(R))$ so $\nabla f$ and $\nabla g$ are homotopic in the category of proper maps. However, $H^q(f,B(R))$ is non-zero only if $q = 0$ and $H^q(g,B(R))$ is non-zero only if $q = 2$ so by Corollary \ref{cor:Invariance} they are not gradient homotopic.
\section{Final remarks}
\subsection{Infinite dimensional case}
Note first that instead of using Morse cohomology groups one can use Conley index theory instead. If instead of $\R^n$ we take a separable Hilbert space and assume that the vector fields are compact perturbations of a fixed bounded self-adjoint Fredholm operator then the cohomological invariant, called $E$-cohomological Conley index, is still well defined (see \cite{Maciej},\cite{JDE} or \cite{StW}). The proof of Proposition \ref{prop:maxInvSet} remain unchanged in this case. Since the $E$-cohomolgocal Conley index is invariant under homotopies (Theorem 2.12 in \cite{JDE}) the conclusion of this note is also true for Hilbert spaces.

\subsection{Further questions}
We have proved that if two Morse-Smale functions have different Morse cohomology groups, then their gradients have to belong to different connected components of  $\mathcal{V}^\nabla$.  This leads to the question whether the map assigning a Morse-Smale function its Morse cohomology groups is injective on connected components of $\mathcal{V}^\nabla$. If the answer is affirmative, then one would like to know what are possible Morse cohomology groups for functions whose gradient is in $\mathcal{V}^\nabla$. 
\subsection*{Acknowledgements}
This research was partly supported by Grant Beethoven2 of the National Science Centre,
Poland, no. 2016/23/G/ST1/04081.

\end{document}